\numberwithin{equation}{section}
\newtheorem{ackn}{Acknowledgments\!}
\newcommand{\abs}[1]{{\left|#1\right|}}
\def\00{{\bf 0}}
\def\SS{\mathbb S}
\def\RR{\mathbb R}
\def\ricc{\mathrm{Ric}}
\newcommand{\eps}{{\varepsilon}}
\def\gt{\widetilde{g}}
\newtheorem*{theorem*}{Theorem}
\newtheorem{theorem}{Theorem}[section]
\newtheorem{lemma}[theorem]{Lemma}
\newtheorem{corollary}[theorem]{Corollary}
\newtheorem{remark}[theorem]{Remark}
\begin{document}
    \title[Two rigidity results for stable minimal hypersurfaces]{Two rigidity results for stable minimal hypersurfaces}

  \date{}

\author{Giovanni Catino, Paolo Mastrolia, Alberto Roncoroni}

\address{G. Catino, Dipartimento di Matematica, Politecnico di Milano, Piazza Leonardo da Vinci 32, 20133, Milano, Italy.}
\email{giovanni.catino@polimi.it}

\address{P. Mastrolia, Dipartimento di Matematica, Universit\`{a} degli Studi di Milano, Via Saldini, Italy.}
\email[]{paolo.mastrolia@unimi.it}

\address{A. Roncoroni, Dipartimento di Matematica, Politecnico di Milano, Piazza Leonardo da Vinci 32, 20133, Milano, Italy.}
\email{alberto.roncoroni@polimi.it}

\begin{abstract}
The aim of this paper is to prove two results concerning the rigidity of complete, immersed, orientable, stable minimal hypersurfaces: we show that they are hyperplane in $\mathds{R}^4$, while they do not exist in positively curved Riemannian $(n+1)$-manifold when $n\leq 5$; in particular, there are no stable minimal hypersurfaces in $\mathds{S}^{n+1}$ when $n\leq 5$. The first result was recently proved also by Chodosh and Li, and the second is a consequence of a more general result concerning minimal surfaces with finite index. Both theorems rely on a conformal method, inspired by a classical work of Fischer-Colbrie.
\end{abstract}

\maketitle

\begin{center}

\noindent{\it Key Words: stable minimal hypersurface, rigidity}

\medskip

\centerline{\bf AMS subject classification:   53C42, 53C21}

\end{center}

\

\section{Introduction}

It is well-known that a minimal surface $M^2 \subset \mathds{R}^3$ is a critical point of the area functional $\mathcal{A}_t$ for all compactly supported variations, i. e. $\left.\frac{d}{d t} \right|_{t=0}\mathcal{A}_t=0$; equivalently,  $M^2$ is minimal if and only if the mean curvature $H$, i.e. the (normalized) trace of the second fundamental form,  is identically zero, or if and only if $M^2$ can be expressed, locally, as the graph $\Gamma(u)$ of a solution $u$ of the minimal surfaces equation
\[
(1+u_x^2)u_{yy}-2u_xu_yu_{xy}+(1+u_y^2)u_{xx}=0.
\]
In 1914, S. Bernstein showed that an entire (i.e., defined on the whole plane $\mathds{R}^2$) minimal graph in $\mathds{R}^3$ is necessarily a plane; the so-called ``Bernstein problem'' in higher dimension  can be then stated in the following way: if the graph $\Gamma(u)$ of a function $u: \mathds{R}^{n} \rightarrow\mathds{R}$ is a minimal hypersurface in $\mathds{R}^{n+1}$, does $\Gamma(u)$ have to be necessarily a hyperplane? Many famous mathematicians worked on this problem in the Sixties, in particular Fleming \cite{fleming} (who  gave a new proof in the case  $n=2$), De Giorgi \cite{degiorgi} (case $n=3$), Almgren \cite{almgren} (case $n=4$),  Simons \cite{simons} (the three remaining cases for  $n\leq 7$) and, eventually, Bombieri, De Giorgi and Giusti \cite{BDGG}, who showed that, for $n\geq 8$, there are minimal entire graphs that are not hyperplanes.
We explicitly remark that a minimal graph is area-minimizing, i.e. it is not only a critical point of the area functional, but also a minimum, while this is not true for minimal hypersurfaces that are ``non-graphical'', and also that area-minimizing implies \emph{stability}, that is the non-negativity of the second variation for the area functional $\left.\frac{d^2}{d t^2} \right|_{t=0}\mathcal{A}_t\geq 0$ for all compactly supported variations.

A natural generalization of the classical Bernstein problem, thus, is the \emph{stable} Bernstein problem, that is:  if $M^{n} \hookrightarrow \mathds{R}^{n+1}$ is a complete, orientable, isometrically immersed, stable minimal hypersurface, does $M$ have to be necessarily a hyperplane? In the case $n=2$ the (positive) answer was given in three different papers, which appeared between 1979 and 1981 (see do Carmo and Peng \cite{docarmopeng1979}, Fischer-Colbrie and Schoen \cite{fisSch}  and Pogorelov \cite{pogor}).

In higher dimensions, the aforementioned result of Bombieri, De Giorgi and Giusti implies that there exist non-flat orientable, complete, stable minimal hypersurfaces in $\mathds{R}^{n+1}$ for $n\geq 8$, while for $n\leq 5$ the stable Bernstein theorems is true with some additional assumptions (for instance, if one requires bounds on the volume growth of geodesic balls, see e.g. \cite{SSY}; see also \cite{docarmopeng2}, \cite{caoshenzhu}, \cite{chen}, \cite{nellisoret} and references therein for other interesting results in the same spirit). Moreover, by \cite{BDGG} and \cite{simhar}, we also note that there are non-flat area-minimizing (and thus minimal and stable) complete orientable hypersurfaces $M^{7} \hookrightarrow \mathds{R}^{8}$.

 Up until recently, without additional hypothesis, the remaining cases ($3\leq n \leq 6$) were still open, even if the study of minimal (in particular stable or in general with finite index) hypersurfaces immersed into a Riemannian manifold (not only the Euclidean space, then) is a very active field and has attracted a lot of interest.
 Then, in 2021, Chodosh and Li \cite{choli} (see also \cite{choli2}) showed that a complete, orientable, isometrically immersed,   stable minimal immersion $M^3\rightarrow \mathds{R}^4$ is a hyperplane. Their  proof, clever and highly non-trivial, is based on the non-parabolicity of $M$: they perform careful estimates for the quantity
\[
F(t) = \int_{\Sigma_t}\abs{\nabla u}^2
\]
(here $u$ is a positive Green's function  for the Laplacian and $\Sigma_t$ is the $t$-level set of $u$), relating it to $\int_{\Sigma_t}\abs{A_M}^2$ ($A_M$ is the second fundamental form of $M$).

In this paper we provide a completely different proof of Chodosh and Li result, based on a conformal deformation of the metric, a comparison result and integral estimates, and we also prove another rigidity result when the ambient space is a complete Riemannian manifold with non-negative sectional curvature and either uniformly positive bi-Ricci curvature or uniformly positive Ricci curvature.
To be precise, and to fix the notation, we consider smooth, complete, connected, orientable, isometrically immersed  hypersurfaces $M^n\hookrightarrow (X^{n+1},h)$, $n\geq 2$, where $(X^{n+1},h)$ is a (complete) Riemannian manifold of dimension $n+1$ endowed with metric $h$. We denote with $g$ the induced metric on $M$ and with $H$ the mean curvature of $M$; we have that $M$ is {\em minimal} if $H\equiv 0$ on $M$. In this latter case we say that $M$ is {\em stable} if
\begin{equation}\label{eq-st}
\int_M \left[|A|^2+\ricc_h(\nu,\nu)\right] \varphi^2\,dV_g \leq \int_M |\nabla \varphi|^2\,dV_g \qquad\forall \varphi\in C^{\infty}_0(M),
\end{equation}
where $A=A_M$ is the second fundamental form of $M^n$, $\nu$ is a unit normal vector to $M$ in $X$ and $dV_g$ is the volume form of $g$ . 

As we recalled before, stability is related to the non-negativity of the second variation or, equivalently, the non-positivity of the Jacobi operator
$$
L_M:=\Delta +|A|^2+\ricc_h(\nu,\nu).
$$

%
%

The first result is thus the following:
\begin{theorem}\label{t1} A complete, orientable, immersed, {\em stable} minimal hypersurface $M^3\hookrightarrow \mathds{R}^{4}$ is a hyperplane.
\end{theorem}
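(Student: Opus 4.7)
The plan is to adapt the classical Fischer-Colbrie strategy, which handles the case $M^{2}\hookrightarrow\mathbb{R}^{3}$, to dimension three via a carefully chosen conformal change of the induced metric on $M$. Stability \eqref{eq-st} says that the first Dirichlet eigenvalue of $-\Delta-|A|^{2}$ on every relatively compact domain of $M$ is non-negative, so by a standard exhaustion and Harnack argument one produces a smooth positive function $u\colon M\to(0,\infty)$ satisfying the Jacobi equation
\begin{equation*}
\Delta u + |A|^{2}\,u=0 \quad \text{on } M.
\end{equation*}
This reduction of stability to the existence of a positive Jacobi field is the classical first move, and it is where the hypothesis of orientability will be used (to make sense of $\nu$ and of $L_{M}$).

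For a parameter $\alpha$ to be chosen I would then consider the conformal metric $\tilde g:= u^{2\alpha}g$ on $M^{3}$. Since $M$ is minimal in $\mathbb{R}^{4}$, the Gauss equation gives $R_{g}=-|A|^{2}$; plugging this into the conformal transformation law for scalar curvature in dimension three, and using $\Delta u/u=-|A|^{2}$, a direct computation yields
\begin{equation*}
u^{2\alpha}\,\tilde R \;=\; (4\alpha-1)\,|A|^{2} \;+\; 2\alpha(2-\alpha)\,\frac{|\nabla u|^{2}}{u^{2}}.
\end{equation*}
Hence for every $\alpha\in[1/4,2]$ the conformal metric $\tilde g$ has non-negative scalar curvature, with $\tilde R>0$ somewhere unless simultaneously $|A|\equiv 0$ and $u$ is constant. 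This identity is the three-dimensional analogue of Fischer-Colbrie's Gauss curvature computation, and it is what makes dimension three special: the quadratic term in $\nabla u/u$ has a definite sign over the full interval $[1/4,2]$.

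With $\tilde g$ in hand, the plan is to choose $\alpha$ so that $(M,\tilde g)$ is complete and has a sufficiently controlled volume growth, inherited from the known (at most Euclidean) area growth of stable minimal hypersurfaces in $\mathbb{R}^{4}$ together with pointwise control on $u$ obtained from the Jacobi equation. Then a rigidity statement for complete $3$-manifolds of non-negative scalar curvature of the appropriate flavor (parabolicity/Liouville or a Schoen-Yau-type obstruction in dimension three) should force $\tilde R\equiv 0$ on $(M,\tilde g)$. The identity above then immediately gives $|A|\equiv 0$, so that $M$ is totally geodesic and, being complete and connected, must be a hyperplane in $\mathbb{R}^{4}$.

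The main obstacle will be the last step: the conformal change $\tilde g=u^{2\alpha}g$ depends on an a priori uncontrolled Jacobi function, and making simultaneously effective the completeness of $\tilde g$, a workable volume-growth bound for $\tilde g$, and pointwise bounds on $u$ is delicate. In practice one will probably not apply a black-box rigidity theorem but rather combine the positivity of the two terms in $u^{2\alpha}\tilde R$ with a well-chosen refined test function $f$ of the form $f=\varphi\,u^{p}$ in \eqref{eq-st}, integration by parts using $\Delta u=-|A|^{2}u$, and Kato-type refinements specific to dimension three; balancing the exponent $\alpha$ (or, equivalently, $p$) against the admissible interval $[1/4,2]$ is where the sharp-dimension argument lives.
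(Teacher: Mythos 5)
Your first two steps (the Fischer--Colbrie reduction to a positive Jacobi field $u$ and the conformal change $\tilde g=u^{2\alpha}g$) match the paper, and your scalar curvature identity
$u^{2\alpha}\tilde R=(4\alpha-1)|A|^{2}+2\alpha(2-\alpha)|\nabla u|^{2}/u^{2}$ is correct. But the argument breaks down exactly where you defer it: there is no ``rigidity statement for complete $3$-manifolds of non-negative scalar curvature'' that forces $\tilde R\equiv 0$. In dimension $2$ non-negative curvature of a complete metric gives parabolicity, which is what makes the Fischer--Colbrie--Schoen/do Carmo--Peng argument close; in dimension $3$ non-negative (even strictly positive) scalar curvature on a complete open manifold carries no Liouville-type or volume-comparison consequence whatsoever ($\mathbb{S}^{2}\times\mathbb{R}$, for instance, is complete with $R>0$). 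So the scalar curvature computation, while true, is not a usable substitute for the two-dimensional Gauss curvature computation, and no choice of $\alpha\in[1/4,2]$ repairs this. A second unsupported ingredient is your appeal to the ``known (at most Euclidean) area growth of stable minimal hypersurfaces in $\mathbb{R}^{4}$'': this is not known a priori and is essentially one of the hard points of the problem.

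What the paper does instead is strictly stronger at the tensor level and supplies the missing analytic machinery. It shows that for $n=3$, $k=2/3$ the metric $\tilde g=u^{4/3}g$ has non-negative $2$-Bakry--Emery--Ricci tensor with weight $f=\tfrac{2}{3}\log u$ (a pointwise lower bound on $\mathrm{Ric}_{\tilde g}+\nabla^{2}_{\tilde g}f-\tfrac12\,df\otimes df$, obtained from the conformal transformation of the full Ricci tensor, the Jacobi equation, and the traceless estimate $A^{2}\le\tfrac{n-1}{n}|A|^{2}g$). It then proves, by a separate second-variation argument along minimizing $\tilde g$-geodesics, that $\tilde g$ is complete; this is a genuine lemma, not automatic. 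Completeness plus the Bakry--Emery bound yields a weighted Bishop--Gromov estimate $\int_{B^{\tilde g}_{R}}u^{4/3}\,dV_{g}\le CR^{5}$, which replaces the volume growth you wanted to import from $g$. Finally, the conclusion $|A|\equiv 0$ comes not from a Liouville theorem but from a weighted Schoen--Simon--Yau integral inequality
$\int_{M}|A|^{5+\delta}u^{-2-2\delta/3}\psi^{5+\delta}\le C\int_{M}u^{-2-2\delta/3}|\nabla\psi|^{5+\delta}$, tested against cutoffs in the $\tilde g$-distance so that the weights combine with the $R^{5}$ volume bound to give a decay $CR^{-\delta}\to 0$. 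To complete your proof you would need to replace the scalar-curvature step with a Ricci-type (or otherwise quantitatively usable) bound, prove completeness of the conformal metric, and supply a concrete integral mechanism in place of the black-box rigidity you invoke.
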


\

The second result concerns minimal hypersurfaces with {\em finite index}. We recall that a minimal immersion $M^n\hookrightarrow (X^{n+1},h)$ has finite index if the number of negative eigenvalues (counted with multiplicity)  of the Jacobi operator $L_M$ on every compact domain in $M$ with Dirichlet boundary conditions  is finite; in particular stability implies finite (equal zero) index.  Before presenting our next result, we need to recall the notion of \emph{bi-Ricci curvature tensor} introduced in \cite{ShYe}: given two orthonormal tangent vectors $u,v$ we define
$$
\mathrm{BRic}_h(u,v)=\mathrm{Ric}_h(u,u)+\mathrm{Ric}_h(v,v) - \mathrm{Sect}_h(u,v)\, ,
$$
where $\mathrm{Sect}_h(u,v)$ denotes the sectional curvature of the plane spanned by $u$ and $v$.
Our second result is the following:

\begin{theorem}\label{t2} If $(X^{n+1},h)$ is a complete $(n+1)$-dimensional, $n\leq 5$, manifold with non-negative sectional curvature and either uniformly positive bi-Ricci curvature or uniformly positive Ricci curvature, then  every complete, orientable, immersed, minimal hypersurface $M^n\hookrightarrow (X^{n+1},h)$ with {\em finite index} must be compact.
\end{theorem}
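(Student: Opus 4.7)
The plan is to argue by contradiction, assuming $M$ is non-compact. First, I would invoke the classical theorem of Fischer-Colbrie on Schr\"odinger operators: since the index of $L_M = \Delta + |A|^2 + \ricc_h(\nu,\nu)$ is finite, there exist a compact set $K \subset M$ and a smooth positive function $u > 0$ on $M \setminus K$ satisfying $L_M u = 0$. Because $K$ is compact while $M$ is assumed non-compact, there is at least one non-compact connected component $E \subset M \setminus K$ on which $u$ is a positive Jacobi function witnessing the stability of the end.

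Next, following Fischer-Colbrie's conformal method, I would pass to the conformal metric $\tilde g = u^{4/(n-2)} g$ on $E$. The conformal transformation law
\begin{equation*}
\tilde R\, u^{\frac{n+2}{n-2}} = -\tfrac{4(n-1)}{n-2}\Delta u + R_M u,
\end{equation*}
the Gauss equation for the minimal hypersurface $M$, $R_M = R^X - 2\ricc_h(\nu,\nu) - |A|^2$, and the equation $\Delta u = -(|A|^2 + \ricc_h(\nu,\nu))u$ combine to give
\begin{equation*}
\tilde R\, u^{\frac{4}{n-2}} = \tfrac{3n-2}{n-2}|A|^2 + \tfrac{2n}{n-2}\ricc_h(\nu,\nu) + R^X.
\end{equation*}
Since $X$ is closed with non-negative sectional curvature ($R^X \geq 0$) and strictly positive Ricci (so $\ricc_h(\nu,\nu) \geq \lambda_0$ for some $\lambda_0 > 0$), the right-hand side is bounded below by $\tfrac{2n}{n-2}\lambda_0 > 0$, and consequently $\tilde R > 0$ strictly on $E$, with a quantitative lower bound.

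To reach the contradiction, I would derive, from the weighted substitutions $f = u^\beta \eta$ into the stability inequality \eqref{eq-st} (with $\eta \in C^\infty_c(E)$ and $\beta = \beta(n)$), a Caccioppoli-type estimate of the form
\begin{equation*}
\int_E \bigl(|A|^2 + \ricc_h(\nu,\nu)\bigr) u^{2\beta} \eta^2 \, dV_g \;\leq\; \frac{1}{1-\beta}\int_E u^{2\beta} |\nabla \eta|^2 \, dV_g
\end{equation*}
(up to absorbed lower-order terms $\int u^{2\beta-2}|\nabla u|^2 \eta^2$), and combine it with the refined Kato inequality $|\nabla A|^2 \geq (1 + 2/n)|\nabla |A||^2$ for minimal hypersurfaces and Simons' identity for $\tfrac{1}{2}\Delta |A|^2$. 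Iterating these estimates through a sequence of nested intrinsic cutoffs on $E$ should force $|A| \equiv 0$ on $E$ and simultaneously force finiteness of the $u^{2\beta}$-weighted volume, which—together with the strictly positive ambient Ricci and the hypothesis $n \leq 5$—is incompatible with $E$ being a non-compact complete end.

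The principal obstacle is the sharpness of the dimensional threshold: the admissible range of the exponent $\beta$ in the weighted stability estimate, paired with the Kato constant $(1+2/n)$ and the coefficient structure of the conformal scalar curvature identity displayed above, is non-empty precisely up to $n = 5$. Closing the Caccioppoli iteration against the strictly positive ambient Ricci—rather than merely extracting flatness as in the Euclidean Schoen-Simon-Yau setting—is therefore the technical core of the argument, and it is here that the ambient positivity is leveraged to upgrade the conclusion from totally geodesic to compact.
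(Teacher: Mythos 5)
Your opening moves coincide with the paper's: argue by contradiction, invoke Fischer--Colbrie to produce a positive solution $u$ of $L_M u=0$ outside a compact set $K$, and pass to a conformal metric $\gt=u^{2k}g$ on the non-compact part. Your conformal scalar curvature identity is also correct (the coefficients $\tfrac{3n-2}{n-2}$ and $\tfrac{2n}{n-2}$ check out against the Gauss equation and the Jacobi equation). But from that point on your argument diverges from the paper's and, as written, does not close.

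The genuine gap is the final contradiction. A Caccioppoli estimate of the form $\lambda_0\int_E u^{2\beta}\eta^2\le C\int_E u^{2\beta}|\nabla\eta|^2$ for all $\eta\in C^\infty_c(E)$ says only that the weighted measure $u^{2\beta}dV_g$ has a spectral gap on the end; this is \emph{not} by itself incompatible with $E$ being complete and non-compact (hyperbolic space satisfies such an inequality). To extract a contradiction one needs growth control on the weighted volume of the end, and you provide none: the Schoen--Simon--Yau iteration you invoke to force $|A|\equiv 0$ relies in the Euclidean setting on polynomial volume growth (via monotonicity, or, in this paper's proof of Theorem \ref{t1}, via a Bakry--\'Emery Bishop--Gromov bound), and no analogue is established for an abstract end of an immersed hypersurface in a closed ambient. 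Moreover, even granting $|A|\equiv 0$ on $E$, the conclusion would again be a spectral-gap inequality $\lambda_0\int\eta^2\le\int|\nabla\eta|^2$, not compactness. Finally, the positivity of $\tilde R$ is computed but never actually used, and the claim that the admissible range of $\beta$ paired with the Kato constant ``is non-empty precisely up to $n=5$'' is asserted rather than demonstrated.

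The paper's mechanism is different and genuinely one-dimensional: it takes a $\gt$-minimizing geodesic ray $\widetilde\gamma$ in $M\setminus K$ of infinite $g$-length, writes the second variation of $\gt$-arclength along it (following Elbert--Nelli--Rosenberg), and, choosing $k=\tfrac{n-1}{n}$, observes that the coefficient $\tfrac{6n-n^2-1}{4n^2}$ is positive exactly for $n\le 5$. Absorbing the cross term then leaves a one-dimensional Poincar\'e inequality $C\int_0^a(\varphi_s)^2\,ds\ge \tfrac{R_0}{n}\int_0^a\varphi^2\,ds$, and testing with $\varphi=\sin(\pi s/a)$ bounds $a$ — a Bonnet--Myers-type length bound in which the positive ambient Ricci enters as the term $kR_0$. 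If you want to salvage your route, you would need to either supply the missing volume-growth input for the end, or replace the Caccioppoli iteration by this second-variation-of-arclength argument.
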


As a byproduct we have the 

\begin{corollary}\label{cor} If $(X^{n+1},h)$ is a complete $(n+1)$-dimensional, $n\leq 5$, manifold with non-negative sectional curvature and uniformly positive Ricci curvature, then there is no complete, orientable, immersed, {\em stable} minimal hypersurface $M^n\hookrightarrow(X^{n+1},h)$.
\end{corollary}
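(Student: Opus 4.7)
The corollary follows essentially immediately by combining Theorem \ref{t2} with the stability inequality \eqref{eq-st} applied to constant test functions. My plan is to argue by contradiction: suppose $M^n\hookrightarrow(X^{n+1},h)$ is a complete, orientable, immersed, stable minimal hypersurface. Stability implies finite index (indeed, index zero), so Theorem \ref{t2} applies and guarantees that $M$ is compact.

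Since $M$ is compact without boundary, the constant function $f\equiv 1$ belongs to $C^\infty_0(M)$ and is an admissible test function in \eqref{eq-st}. Plugging it in yields
\begin{equation*}
\int_M \left[|A|^2+\ricc_h(\nu,\nu)\right]dV_g \leq \int_M |\nabla 1|^2\,dV_g = 0.
\end{equation*}
On the other hand, $|A|^2\geq 0$ pointwise on $M$, and by hypothesis $\ricc_h>0$ on $X$, so in particular $\ricc_h(\nu,\nu)>0$ along $M$. Therefore the integrand on the left is strictly positive everywhere, and integrating over the compact manifold $M$ (which has positive volume) gives a strictly positive quantity, contradicting the inequality above.

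There is no real obstacle here once Theorem \ref{t2} is in hand: the only content is the reduction ``stable $\Rightarrow$ finite index'' and the standard observation that positive ambient Ricci curvature rules out compact stable minimal hypersurfaces via the constant test function. I would write the argument in just a few lines, explicitly noting that stability corresponds to index zero so that Theorem \ref{t2} is directly applicable.
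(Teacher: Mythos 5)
Your argument is correct and matches the paper's proof: the paper likewise invokes Theorem \ref{t2} to conclude compactness and then derives the contradiction from positivity of $\ricc_h$, explicitly noting that one can use $f\equiv 1$ in the stability inequality \eqref{eq-st} (its primary phrasing integrates the Fischer-Colbrie equation $-\Delta u = [|A|^2+\ricc_h(\nu,\nu)]u$ over $M$, which is equivalent). No gaps.
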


In particular, there is no complete, orientable, immersed, stable minimal hypersurface of the round spheres $M^n\hookrightarrow (\SS^{n+1},g_{\text{std}})$, provided $n\leq 5$. In dimension $n=2$ this follows from a more general result proved in \cite{schyau}, while, in dimension $n=3$, it was recently proved in \cite[Corollary 1.5]{cholistr}.  We mention that Theorem \ref{t2} holds also for complete, orientable, immersed, stable minimal hypersurface of the cylinder $M^n\hookrightarrow (\RR\times\SS^{n},g_{\text{std}})$ (observe that in this case $\mathrm{Sect}\geq 0 $ and $\mathrm{BRic}\geq 1$), provided $n\leq 5$. As far as we know, Corollary \ref{cor} is new in the cases $n=4,5$. We do not know if Theorem \ref{t2} and Corollary \ref{cor} hold also in dimension greater than five. We note that, in the same spirit, in \cite{ShYe} the authors obtained a compactness result for stable minimal hypersurfaces of dimension $n\leq 4$ immersed in space with uniformly positive bi-Ricci curvature.

\

\section{Proof of Theorem \ref{t1}}

In this section we give an alternative proof of \cite[Theorem 1]{choli} (see Theorem \ref{t1}). The main idea is to use a weighted volume comparison for a suitable conformal metric $\gt$ together with a new weighted integral estimate inspired by \cite{SSY}. 

Let $M^n\hookrightarrow \mathds{R}^{n+1}$ be a complete, connected, orientable, isometrically immersed, stable minimal hypersurface.

\subsection{Conformal change} It is well known (see e.g. \cite[Proposition 1]{fis}) that, since $M^n\hookrightarrow \mathds{R}^{n+1}$ is stable, then there exists a positive function $0<u\in C^{\infty}(M)$ satisfying
\begin{equation}\label{eq-u}
-\Delta_g u = |A|_g^2 u\quad\text{on }M\,.
\end{equation}
Following the line in \cite{fis} (see also \cite{elbnelros}), let $k>0$ and consider the conformal metric
$$
\gt = u^{2k} g,
$$
where $g=\imath^*h$ is the induced metric on $M$ (and $\imath$ denotes the inclusion). First of all we prove the following lower bound for a modified Bakry-Emery-Ricci curvature of $\gt$. In particular, this implies the non-negativity of the $2$-Bakry-Emery-Ricci curvature of $\gt$ for a suitable $k$.
\begin{lemma}\label{l-con} Let $f:=k(n-2)\log u$. Then the Ricci tensor of the metric $\gt=u^{2k} g$ satisfies
$$
\mathrm{Ric}_{\gt}+\nabla^2_{\gt}f-\frac{1-k(n-2)}{k(n-2)^2}df\otimes df \geq \left(k-\frac{n-1}{n}\right)|A|_g^2 g
$$
in the sense of quadratic forms.
In particular, if $n=3$ and $k=\frac 23$, then the $2$-Bakry-Emery-Ricci tensor $\mathrm{Ric}_{\gt}^{2,f}:=\mathrm{Ric}_{\gt}+\nabla^2_{\gt}f-\frac12 df\otimes df$ satisfies
$$
\mathrm{Ric}_{\gt}^{2,f} \geq 0.
$$
\end{lemma}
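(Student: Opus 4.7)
The plan is a direct computation in two steps: first reduce everything to Riemannian quantities on $(M,g)$ using the conformal change, then exploit the Jacobi equation and the Gauss equation. The key guiding observation is that the peculiar choice $f=k(n-2)\log u=(n-2)\phi$, with $\phi:=k\log u$, is tailored so that the $\nabla^2_g \phi$ terms disappear.

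First I would apply the standard formulas for a conformal change $\gt=e^{2\phi}g$:
\begin{equation*}
\mathrm{Ric}_{\gt}=\mathrm{Ric}_g-(n-2)\bigl[\nabla^2_g\phi-d\phi\otimes d\phi\bigr]-\bigl[\Delta_g\phi+(n-2)|\nabla\phi|_g^2\bigr]g,
\end{equation*}
\begin{equation*}
\nabla^2_{\gt}f=\nabla^2_g f-df\otimes d\phi-d\phi\otimes df+\langle\nabla f,\nabla\phi\rangle_g\,g.
\end{equation*}
Substituting $f=(n-2)\phi$, the Hessians $\nabla^2_g\phi$ and the trace-like terms $|\nabla\phi|^2_g\,g$ cancel, leaving
\begin{equation*}
\mathrm{Ric}_{\gt}+\nabla^2_{\gt}f=\mathrm{Ric}_g-(n-2)\,d\phi\otimes d\phi-\Delta_g\phi\cdot g.
\end{equation*}
Now subtracting $\frac{1-k(n-2)}{k(n-2)^2}\,df\otimes df=\frac{1-k(n-2)}{k}\,d\phi\otimes d\phi$ combines the coefficients of $d\phi\otimes d\phi$ to exactly $-1/k$. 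Using the Jacobi equation $-\Delta_g u=|A|_g^2 u$ I get $\Delta_g\phi=-k|A|_g^2-\tfrac{1}{k}|\nabla\phi|_g^2$, so the term $-\Delta_g\phi\cdot g$ produces both $k|A|_g^2 g$ and $\tfrac{1}{k}|\nabla\phi|_g^2\,g$. The remaining piece $\tfrac{1}{k}\bigl(|\nabla\phi|_g^2\,g-d\phi\otimes d\phi\bigr)$ is non-negative by Cauchy--Schwarz, so it may be discarded. Finally, from the Gauss equation in flat ambient space and $H\equiv 0$, one has $\mathrm{Ric}_g=-A\circ A$ as a quadratic form.

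At this point the inequality reduces to $-A\circ A+k|A|_g^2\,g\geq\bigl(k-\tfrac{n-1}{n}\bigr)|A|_g^2\,g$, which is the classical pointwise traceless estimate: if $A$ is symmetric with $\mathrm{tr}\,A=0$ and eigenvalues $\lambda_i$, then $\lambda_i^2\le\tfrac{n-1}{n}|A|^2$, hence $A\circ A\le\tfrac{n-1}{n}|A|_g^2\,g$. For the final statement I simply plug in $n=3$ and $k=2/3$: the factor $k-\tfrac{n-1}{n}$ vanishes and $\tfrac{1-k(n-2)}{k(n-2)^2}=\tfrac{1/3}{2/3}=\tfrac12$, so the resulting tensor is exactly the $2$-Bakry--Emery--Ricci tensor as defined in the statement, and it is non-negative. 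There is no serious obstacle here beyond bookkeeping of the conformal coefficients and being careful with signs; the slightly unusual-looking coefficient $\tfrac{1-k(n-2)}{k(n-2)^2}$ is the one that makes the $d\phi\otimes d\phi$ terms collapse cleanly to $-\tfrac{1}{k}$, which is what matches the Jacobi-equation contribution after Cauchy--Schwarz.
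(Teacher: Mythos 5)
Your proposal is correct and follows essentially the same route as the paper: the standard conformal-change formulas for $\mathrm{Ric}_{\gt}$ and $\nabla^2_{\gt}f$ with $f=(n-2)\varphi$, $\varphi=k\log u$, the Jacobi equation \eqref{eq-u} to convert $-\Delta_g\varphi\, g$ into $k|A|_g^2 g+\frac1k|\nabla\varphi|_g^2 g$, Cauchy--Schwarz to discard $\frac1k(|\nabla\varphi|_g^2 g-d\varphi\otimes d\varphi)$, and finally the Gauss equation together with the traceless eigenvalue bound $A^2\le\frac{n-1}{n}|A|^2g$. All coefficient bookkeeping (in particular the collapse of the $d\varphi\otimes d\varphi$ terms to $-\frac1k$ and the values at $n=3$, $k=\frac23$) checks out.
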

\begin{proof}
  Since $f=k(n-2)\log u$, we  have
  \[
  df = k(n-2)\frac{du}{u}
   \]
   and
   \[
   \nabla^2_gf= k(n-2)\left(\frac{\nabla^2_gu}{u}-\frac{du\otimes du}{u^2}\right),
   \]
   which implies
   \[
   \Delta_gf = k(n-2)\left(\frac{\Delta_gu}{u}-\frac{\abs{\nabla_gu}^2_g}{u^2}\right).
   \]
    On the other hand, from the standard formulas for a conformal change of the metric $\tilde{g}=e^{2\varphi}g$, $\varphi\in C^\infty(M)$, $\varphi>0$ we get
 \[
  \mathrm{Ric}_{\gt} = \mathrm{Ric}_g-(n-2)\left(\nabla_g^2\varphi-d\varphi\otimes d\varphi\right)-\left[\Delta_g \varphi+(n-2)\abs{\nabla_g \varphi}_g^2\right]g
  \]
  and
    \[
  \nabla^2_{\gt} f = \nabla^2_gf-\left(df\otimes d\varphi+d\varphi\otimes df\right)+g\left(\nabla f, \nabla \varphi\right)g.
  \]
  Note that, in our case, $\varphi=k\log u$; now we exploit the facts that $u$ is a solution of equation \eqref{eq-u} to write
  \begin{align*}
  \mathrm{Ric}_{\gt}+\nabla^2_{\gt} f &=  \mathrm{Ric}_g-k^2(n-2)\frac{du\otimes du}{u^2} + k\abs{A}_g^2g + k\frac{\abs{\nabla_gu}^2_g}{u^2}g \\ &=\mathrm{Ric}_g-\frac{df\otimes df}{n-2} + k\abs{A}^2g + \frac{\abs{\nabla_gf}^2_g}{k(n-2)^2}g.
  \end{align*}
 From the Cauchy-Schwarz inequality we have
 \[
 \abs{\nabla_gf}^2_gg \geq df\otimes df,
 \]
 thus
   \begin{align*}
  \mathrm{Ric}_{\gt}+\nabla^2_{\gt} f &\geq \mathrm{Ric}_g+\frac{1-k(n-2)}{k(n-2)^2}df\otimes df + k\abs{A}_g^2g;
  \end{align*}
 from Gauss equations in the minimal case we get $\mathrm{Ric}_g=-A^2$; since $A$ is traceless we have the inequality
 \[
 A^2 \leq \frac{n-1}{n}\abs{A}^2g,
 \]
 and substituting in the previous relation we conclude
\[
\mathrm{Ric}_{\gt} + \nabla^2_{\gt} f-\frac{1-k(n-2)}{k(n-2)^2}df\otimes df \geq \left(k-\frac{n-1}{n}\right)|A|_g^2 g.
\]
\end{proof}

\

\subsection{Completeness} In this subsection we are going to prove that the conformal metric $\gt = u^{2k} g$ is complete, provided $n=3$ and $k=\frac 23$. In order to do this we follow the strategy in \cite{fis} and we use some computations in \cite{elbnelros}. First, we recall that in the proof of \cite[Theorem 1]{fis}, given a reference point $O\in M^n$, the author showed the existence of a $\gt$-minimizing geodesic, 
$$
\gamma(s):[0,\infty)\to M^n,
$$
where $s$ is the $g$-arclength and $M^n\hookrightarrow \mathds{R}^{n+1}$ is the usual complete, connected, orientable, isometrically immersed, stable minimal hypersurface. For the sake of completeness, we report the argument here. First of all, for every $R>0$, we consider the geodesic ball (of $g$) centered at $O$ of radius $R$, $B_R(O)$. Then, we first claim that there exists a $\gt$-minimizing geodesic, $\gamma_R$, joining $O$ to any boundary point of $B_R(O)$. Indeed, consider $u_R:=u+\eta$, where $\eta$ is a smooth function such that $\eta\equiv 0$ in $B_R(O)$ and $\eta\equiv 1$ in $M^n\setminus B_{R+1}(O)$. Since $u_R$ is bounded below, the metric 
$$
\gt_R=u_R^{\frac{2(n-1)}{n}}g
$$ 
is complete, and these geodesics exist. Therefore, for every $R_i>0$, since $\partial B_{R_i}(O)$ is compact, there exists $x_i\in \partial B_{R_i}(O)$ so that $x_i$ is closest (in $\gt_R$) to $O$. Let $\gamma_i$ be the $\gt_R$-minimizing geodesic joining $O$ to $x_i$. Note that $\gamma_i\subset B_{R_i}(O)$ or another point would be closer to $O$. Since $u_{R_i}=u$ in $B_{R_i}(O)$, then $\gamma_i$ is a $\gt$-minimizing geodesic.
We parametrize $\gamma_i$ with respect to $g$-arclength. In particular, since $|\dot \gamma_i(s)|_g=1$ for every $s$, up to subsequences, the sequence $ \dot\gamma_i(0)$ converges to a limit vector as $R_i\to \infty$. Thus, by ODE theory and Ascoli-Arzel\`a, $\gamma_i$ converges on compact sets of $[0,\infty)$ to a limiting curve $\gamma$ which is a $\gt$-minimizing geodesic ad is parametrized by $g$-arclength.

\begin{remark}\label{r-lunghe}
$(i)$ We observe that the completeness of the metric $\gt = u^{2k} g$ will follow if we can show that the $\gt$-length of $\gamma$ is infinite, i.e.
$$
\int_\gamma\,d\tilde{s} = \int_\gamma u^{k}\,ds=+\infty.
$$
Indeed, by construction, the $\gt$-length of every other divergent geodesic starting from $O$ (i.e. its image does not lie in any ball $B_R(O)$) must be greater or equal than the one of $\gamma$.

\medskip

\noindent $(ii)$ Note that $\gamma$ has unit speed with respect to $g$ and to $\gt$, when it is parametrized by the arclength $s$ and $\tilde{s}$, respectively.
\end{remark}

\

\noindent From now on 
$$
n=3 \quad \text{and} \quad k=\frac{2}{3}\, . 
$$



\begin{lemma}\label{l-com} The metric $\gt=u^{\frac43} g$ is complete.
\end{lemma}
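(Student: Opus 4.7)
The plan is to argue by contradiction, adapting the strategy of Fischer--Colbrie \cite{fis} (extended to higher dimensions in \cite{elbnelros}). Suppose $\tilde g=u^{4/3}g$ is not complete. Since $(M,g)$ is complete, this forces the existence of a smooth curve $\gamma:[0,1)\to M$ with infinite $g$-length but finite $\tilde g$-length $\int_0^1 u^{2/3}(\gamma(t))|\dot\gamma(t)|_g\,dt<\infty$. Intuitively, $u$ must decay rapidly along some $g$-divergent end, and the goal is to rule this out using the stability inequality \eqref{eq-st} coupled with the eigenfunction equation \eqref{eq-u}.

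The main device is to test \eqref{eq-st} with a cutoff built from the $\tilde g$-distance $\tilde r(x):=d_{\tilde g}(x,p_0)$ to a base point $p_0$: take $\phi=u^{\alpha}\chi(\tilde r)$ with $\chi\equiv 1$ on $[0,R/2]$, supported in $[0,R]$, $|\chi'|\le 2/R$, and an exponent $\alpha$ tuned to the conformal factor. Two identities are decisive: first, $|\nabla_g\tilde r|_g^2=u^{4/3}|\nabla_{\tilde g}\tilde r|^2_{\tilde g}\le u^{4/3}$, and second, the volume relation $dV_g=u^{-2}\,dV_{\tilde g}$ in dimension three. Together they convert the gradient side of \eqref{eq-st} into a $\tilde g$-integral of a power of $u$ over the annulus $A_R=\{R/2<\tilde r<R\}$, with an explicit $R^{-2}$ factor coming from $\chi'$; the choice $\alpha=1/3$ collapses the integrand to $1$, so the right-hand side is bounded by $C R^{-2}\,\mathrm{Vol}_{\tilde g}(A_R)$. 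Simultaneously, multiplying \eqref{eq-u} by $u^{2\alpha-1}\chi(\tilde r)^2$ and integrating by parts transforms the $|A|^2$-term into an analogous $\tilde g$-integral, so the combined inequality becomes a quantitative estimate expressed purely in terms of $\tilde g$-quantities.

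A technical subtlety is that, under the contradiction hypothesis, $\tilde g$-balls may fail to be $g$-precompact, so $\phi$ is not a priori compactly supported in $g$; this is handled by multiplying by an auxiliary cutoff $\eta_N\in C_c^\infty(M)$ supported in a $g$-ball of radius $N$, proving the estimate for $\phi\eta_N$, and passing to the limit as $N\to\infty$ by monotone convergence. The main obstacle, and the crux of the argument, is to extract the contradiction as $R\to\infty$: the divergent curve $\gamma$ with its $\tilde g$-Cauchy tail must force the nonnegative integral on the left-hand side of the combined inequality to blow up, while the right-hand side remains of order $R^{-2}\,\mathrm{Vol}_{\tilde g}(A_R)$ and stays bounded; the expected mechanism is a Harnack-type lower estimate for positive solutions of \eqref{eq-u} preventing $u$ from collapsing, so that the finiteness of $\int_\gamma u^{2/3}\,ds_g$ becomes incompatible with the Caccioppoli bound. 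Closing this gap yields the desired contradiction and proves that $\tilde g$ is complete.
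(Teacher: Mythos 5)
Your proposal is a plan with the decisive step missing, and you say so yourself: ``the expected mechanism is a Harnack-type lower estimate\dots Closing this gap yields the desired contradiction.'' That gap is precisely where the proof lives. Moreover, the mechanism you sketch is unlikely to close it. If $\gt$ is incomplete, the divergent curve $\gamma$ has finite $\gt$-length $L$, hence lies entirely inside the $\gt$-ball of radius $L$ about $p_0$; your cutoff $\chi(\tilde r)$ is then identically $1$ on all of $\gamma$ once $R>2L$, so letting $R\to\infty$ never localizes near the incomplete end, and nothing forces the left-hand side of your Caccioppoli inequality to blow up. The finiteness of $\int_\gamma u^{2/3}\,ds_g$ is a statement about $u$ along a single curve, while your estimate controls weighted integrals of $|A|$ and $\nabla u$ over all of $M$; no Harnack inequality bridges these, since $u$ may legitimately decay along an end of infinite $g$-length. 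The auxiliary-cutoff limit $N\to\infty$ is also not a monotone-convergence situation: the cross terms involving $\nabla\eta_N$ have no sign and no a priori decay.

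The paper's argument uses a structurally different input that your proposal omits entirely: one constructs a \emph{minimizing geodesic} $\gamma(s)$ for $\gt$ (parametrized by $g$-arclength) and exploits the nonnegativity of the \emph{second variation of $\gt$-arclength along $\gamma$} --- a one-dimensional index-form inequality --- rather than the stability inequality \eqref{eq-st} of $M$ as a minimal hypersurface. Via the conformal change formulas and equation \eqref{eq-u}, that index form produces the one-dimensional integral inequality
$(n-1)\int(\varphi_s)^2u^{-k}\,ds \geq \int \varphi^2 u^{-k}(\cdots)\,ds$
along $\gamma$, and the substitutions $\varphi=u^k\psi$, $\psi=s\eta$ yield $\int_0^R u^{2/3}\,ds\leq C\int_R^\infty u^{2/3}\,ds$, which forces $\int_0^\infty u^{2/3}\,ds=+\infty$, i.e.\ completeness. (The stability of $M$ enters only through the existence of the positive solution $u$ of \eqref{eq-u}.) Without the second variation of the geodesic --- or some substitute for it --- your approach does not reach a contradiction, so as it stands the proof is incomplete.
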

\begin{proof} We do part of the computations for every $n$. We consider the $\gt$-minimizing geodesic $\gamma$ just constructed and as observed in Remark \ref{r-lunghe} the completeness of $\gt$ is equivalent to prove that $\gamma$ has infinite $\gt$ length, i.e.
$$
\int_0^{+\infty}u^{k}(\gamma(s))\,ds =+\infty.
$$
Since $\gamma$ is minimizing, by the second variation formula, following the computations in the proof of Theorem 1 (with $H=0$) in \cite{elbnelros}, we obtain
\begin{align*}
(n-1)\int_0^{+\infty}& (\varphi_s)^2u^{-k}\,ds \geq \int_0^{+\infty} \varphi^2 u^{-k}\left(k|A|^2-A^2_{11}-\sum_{j=2}^n A^2_{1j}\right)\,ds\\
&\quad-k(n-2)\int_0^{+\infty} \varphi^2u^{-k}(\log u)_{ss}\,ds +k\int_0^{+\infty}\varphi^2u^{-k}\frac{|\nabla u|^2}{u^2}\,ds
\end{align*}
for every smooth function $\varphi$ with compact support in $(0,+\infty)$ and for every $k>0$. Since $A$ is trace-free, we have
$$
|A|^2\geq A_{11}^2+A_{22}^2+\ldots+A_{nn}^2+2\sum_{j=2}^n A_{1j}^2 \geq \frac{n}{n-1}A_{11}^2+2\sum_{j=2}^n A_{1j}^2,
$$
thus
$$
k|A|^2-A^2_{11}-\sum_{j=2}^n A^2_{1j}\geq \left(\frac{kn}{n-1}-1\right)A_{11}^2+(2k-1)\sum_{j=2}^n A^2_{1j}.
$$
In particular, if
\begin{equation}\label{k1}
k\geq \frac{n-1}{n}
\end{equation}
we have
$$
\int_0^{+\infty} \varphi^2 u^{-k}\left(k|A|^2-A^2_{11}-\sum_{j=2}^n A^2_{1j}\right)\,ds\geq 0.
$$
Using this estimate, the fact that $|\nabla u|^2\geq (u_s)^2$ and integrating by parts, we obtain
\begin{align*}
(n-1)\int_0^{+\infty} (\varphi_s)^2u^{-k}\,ds &\geq
2k(n-2)\int_0^{+\infty} \varphi\varphi_s u^{-k-1}u_s\,ds \\
&\quad+k\left[1-k(n-2)\right]\int_0^{+\infty}\varphi^2u^{-k-2}(u_s)^2\,ds.
\end{align*}
Let now  $\varphi=u^k\psi$, with $\psi$ smooth with compact support in $(0,+\infty)$. We have
\begin{align*}
  \varphi^2u^{-k}&=u^{k}\psi^2,\\ \varphi_s&=k\psi u^{k-1}u_s+u^k\psi_s,\\ (\varphi_s)^2u^{-k}&=k^2\psi^2u^{k-2}(u_s)^2+u^k(\psi_s)^2+2k\psi\psi_su^{k-1}u_s,
\end{align*}
and substituting in the previous relation we get
\begin{align}\label{es1}
(n-1)\int_0^{+\infty} u^{k}(\psi_s)^2\,ds \geq
-2k\int_0^{+\infty} \psi\psi_s u^{k-1}u_s\,ds +k(1-k)\int_0^{+\infty}\psi^2u^{k-2}(u_s)^2\,ds.
\end{align}
Let
$$
I:=\int_0^{+\infty} \psi\psi_s u^{k-1}u_s\,ds;
$$
thus we have
$$
I = \frac 1k \int_0^{+\infty} \psi\psi_s (u^{k})_s\,ds=-\frac 1k \int_0^{+\infty} u^k(\psi_s)^2\,ds-\frac 1k \int_0^{+\infty} \psi\psi_{ss} u^{k}\,ds.
$$
Moreover, for every $t>1$ and using Young's inequality for every $\eps>0$, we have
\begin{align*}
2kI &=2kt I+2k(1-t)I\\ &=-2t \int_0^{+\infty} u^k(\psi_s)^2\,ds-2t \int_0^{+\infty} \psi\psi_{ss} u^{k}\,ds+2k(1-t)\int_0^{+\infty} \psi\psi_s u^{k-1}u_s\,ds\\
&\leq -2t \int_0^{+\infty} u^k(\psi_s)^2\,ds-2t \int_0^{+\infty} \psi\psi_{ss} u^{k}\,ds\\
&\quad+k(t-1)\eps \int_0^{+\infty}\psi^2u^{k-2}(u_s)^2\,ds+\frac{k(t-1)}{\eps}\int_0^{+\infty} u^k(\psi_s)^2\,ds.
\end{align*}

Assuming
\begin{equation}\label{k2}
k<1
\end{equation}
and choosing
$$
\eps:=\frac{1-k}{t-1}
$$
we obtain
\begin{align*}
2kI &\leq -2t \int_0^{+\infty} \psi\psi_{ss} u^{k}\,ds+k(1-k)\int_0^{+\infty}\psi^2u^{k-2}(u_s)^2\,ds\\
&\quad+\left[\frac{k(t-1)^2}{1-k}-2t\right]\int_0^{+\infty} u^k(\psi_s)^2\,ds.
\end{align*}
From \eqref{es1} we get
$$
0\leq \left[\frac{k(t-1)^2}{1-k}-2t+(n-1)\right]\int_0^{+\infty} u^k(\psi_s)^2\,ds-2t \int_0^{+\infty} \psi\psi_{ss} u^{k}\,ds
$$
for every $t>1$ and every $k$ satisfying \eqref{k1} and \eqref{k2}. Let
$$
P(t):=\frac{k(t-1)^2}{1-k}-2t+(n-1)
$$
and choose $k=\frac{n-1}{n}$. It is easy to see that $P(t)$ is negative for some $t>1$ if $n=3$: indeed
$$
P(t)=(n-1)t^2-2nt+2(n-1)=2t^2-6t+4=-2(t-1)(2-t).
$$
Therefore, if $n=3$, $k=\frac 23$ and $t=\frac 32$, we deduce
$$
0\leq -\int_0^{+\infty} u^{\frac23}(\psi_s)^2\,ds-6 \int_0^{+\infty} u^{\frac23}\psi\psi_{ss} \,ds
$$
for every $\psi$ smooth with compact support in $(0,+\infty)$. Now we choose $\psi=s\eta$ with $\eta$ smooth with compact support in $(0,+\infty)$: thus
$$
\psi_s=\eta+s\eta_s,\quad\psi_{ss}=2\eta_s+s\eta_{ss},
$$
and we get
$$
\int_0^{+\infty} u^{\frac23}\eta^2\,ds\leq \int_0^{+\infty} u^{\frac23}\left(-14s\eta\eta_s-6s^2\eta\eta_{ss}-s^2(\eta_s)^2\right)\,ds.
$$
Choose $\eta$ such that $\eta\equiv 1$ on $[0,R]$, $\eta\equiv 0$ on $[2R,+\infty)$ and with $|\eta_s|$ and $|\eta_{ss}|$ bounded by $C/R$ and $C/R^2$, respectively, for $R\leq s\leq 2R$ ($C$ is a positive constant). Then
$$
\int_0^R u^{\frac23}\,ds\leq \int_0^{+\infty} u^{\frac23}\eta^2\,ds \leq C \int_R^{+\infty}u^{\frac23}\,ds
$$
for some $C>0$ independent of $R$. We conclude that
$$
\int_0^{+\infty}u^{\frac23}\,ds =+\infty,
$$
i.e. $\gt=u^{\frac 43} g$ is complete.
\end{proof}

\subsection{Weighted integral estimates}  From lemma \ref{l-con} and lemma \ref{l-com} we have that the metric $\gt=u^{\tfrac43}g$ is complete and it has non-negative $2$-Bakry-Emery-Ricci curvature. Using well known comparison results (see \cite{Qian}) we immediately obtain the following weighted Bishop-Gromov volume estimate for a geodesic ball $B^{\gt}_R(x_0)$ centered at $x_0\in M$, of radius $R$, with respect to the metric $\gt$.

\begin{corollary}\label{c-bg} Let $x_0\in M^3$. Then, for every $R>0$, there exists $C>0$ such that the $f$-volume
$$
\operatorname{Vol}_f B^{\gt}_R(x_0):=\int_{B^{\gt}_R(x_0)}e^{-f}\,dV_{\gt}\leq C R^5,
$$
where $f=\frac23 \log u$. Equivalently, in terms of $u$ and the volume form of $g$,
$$
\int_{B^{\gt}_R(x_0)}u^{\frac 43}\,dV_{g}\leq C R^5.
$$
\end{corollary}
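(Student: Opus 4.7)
The plan is to deduce the corollary as an essentially immediate consequence of the two preceding lemmas combined with the weighted Bishop--Gromov comparison of Wei--Wylie. From Lemma \ref{l-con} (specialized to $n=3$, $k=\tfrac{2}{3}$) we have, setting $f=\tfrac{2}{3}\log u$, the curvature lower bound $\mathrm{Ric}^{2,f}_{\tilde g}\geq 0$, and from Lemma \ref{l-com} the conformal metric $\tilde g=u^{4/3}g$ is complete. Thus $(M^3,\tilde g, e^{-f}dV_{\tilde g})$ is a smooth metric measure space of base dimension $3$ with non-negative $m$-Bakry-Emery Ricci tensor for $m=2$. The cited theorem \cite[Theorem A.1]{weiwyl} then yields the comparison estimate
$$
\operatorname{Vol}_f B^{\tilde g}_R(x_0)\leq C\, R^{n+m}=C\,R^{3+2}=C\,R^{5},
$$
for some constant $C>0$ possibly depending on $x_0$ (and on the value of the measure at a small reference ball), which is precisely the first assertion.

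For the equivalent formulation it only remains to make the change of variable explicit. Since $\tilde g=u^{4/3}g$ on a $3$-dimensional manifold, the volume form transforms as $dV_{\tilde g}=u^{(4/3)\cdot(3/2)}\,dV_g=u^{2}\,dV_g$, while the weight satisfies $e^{-f}=u^{-2/3}$. Consequently,
$$
e^{-f}\,dV_{\tilde g}=u^{-2/3}\cdot u^{2}\,dV_g=u^{4/3}\,dV_g,
$$
so the weighted volume estimate above is equivalent to
$$
\int_{B^{\tilde g}_R(x_0)} u^{4/3}\,dV_g\leq C\,R^{5},
$$
as claimed. The only non-trivial step is invoking the Wei--Wylie comparison, but that is already available as a black box, so I do not anticipate any real obstacle; care must only be taken that the sign conventions and the parameter $m$ in the $m$-Bakry-Emery Ricci curvature match those of \cite{weiwyl}, which indeed they do (their "effective dimension" is $n+m$, giving the exponent $5$ in dimension $3$ with $m=2$).
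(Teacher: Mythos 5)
Your proposal is correct and follows exactly the paper's route: the corollary is obtained by combining Lemma \ref{l-con} (non-negativity of $\mathrm{Ric}^{2,f}_{\gt}$ with $f=\tfrac23\log u$) and Lemma \ref{l-com} (completeness of $\gt$) with the weighted Bishop--Gromov comparison of Wei--Wylie, which gives the bound $CR^{n+m}=CR^5$. Your explicit verification that $e^{-f}\,dV_{\gt}=u^{-2/3}\cdot u^{2}\,dV_g=u^{4/3}\,dV_g$ is exactly the identification the paper uses for the "equivalent" formulation.
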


The last ingredient that we need in the proof of Theorem \ref{t1} is the following weighted integral inequality
in the spirit of \cite[Theorem 1]{SSY}.

\begin{lemma}\label{l-ssy} For every $0<\delta<\frac{1}{100}$, there exists $C>0$ such that
$$
\int_M |A|^{5+\delta} u^{-2-\frac{2\delta}{3}} \psi^{5+\delta}\,dV_g \leq C \int_M  u^{-2-\frac{2\delta}{3}} |\nabla \psi|^{5+\delta}\,dV_g\qquad\forall \psi\in C^{\infty}_0(M).
$$
\end{lemma}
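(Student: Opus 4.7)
The plan is to adapt the classical Schoen--Simon--Yau integral estimate to the weighted setting. The weight $u^{-2-2\delta/3}$ is dictated by the Jacobi function $u$ solving \eqref{eq-u}, and the three ingredients are: the stability inequality \eqref{eq-st}, the Simons inequality combined with Kato's refinement, and a Bochner-type identity derived from \eqref{eq-u}.

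First I would apply \eqref{eq-st} to the test function $f = |A|^{(3+\delta)/2}u^{-1-\delta/3}\psi^{(5+\delta)/2}$, whose square matches the integrand on the left-hand side of the claim. Expanding $|\nabla f|^2$ gives an upper bound for $X := \int_M |A|^{5+\delta}u^{-2-2\delta/3}\psi^{5+\delta}\,dV_g$ in terms of
\[
Y := \int_M |A|^{1+\delta}|\nabla|A||^2 u^{-2-2\delta/3}\psi^{5+\delta}\,dV_g,\quad Z := \int_M |A|^{3+\delta}u^{-4-2\delta/3}\psi^{5+\delta}|\nabla u|^2\,dV_g,
\]
the quantity $W := \int_M |A|^{3+\delta}u^{-2-2\delta/3}\psi^{3+\delta}|\nabla\psi|^2\,dV_g$, and three cross integrals arising from polarization. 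Next, I would use the refined Simons inequality $|A|\Delta|A| \geq \tfrac{2}{3}|\nabla|A||^2 - |A|^4$, valid on minimal $M^3 \subset \mathbb{R}^4$ by combining Simons' identity with Kato's estimate $|\nabla A|^2 \geq (1+2/n)|\nabla|A||^2$ for $n=3$; multiplying by $|A|^{1+\delta}u^{-2-2\delta/3}\psi^{5+\delta}$ and integrating by parts yields an inequality of the form $Y \leq c_0\, X + (\text{cross terms})$, with $c_0 = 3/8$ when $\delta = 0$.

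Third, to eliminate $Z$ I would use the Bochner-type identity $-\Delta \log u = |A|^2 + |\nabla \log u|^2$, obtained by dividing \eqref{eq-u} by $u$. Testing against $|A|^{3+\delta}u^{-2-2\delta/3}\psi^{5+\delta}$ and integrating by parts produces
\[
\bigl(3 + \tfrac{2\delta}{3}\bigr)\,Z + X = (3+\delta)\,C_1 + (5+\delta)\,C_3,
\]
where $C_1, C_3$ denote the cross terms involving $\nabla u$. The negative sign in front of $X$ is decisive: after substituting into the stability inequality, the original $Z$ term effectively contributes a \emph{negative} multiple of $X$. Tracking coefficients one finds that the net coefficient of $X$ on the right-hand side is strictly less than unity; for $\delta=0$ a direct computation gives $(4/3)X \leq (27/32)X + (\text{cross terms and }W)$, leaving a positive margin of $47/96$, and this margin persists for all $\delta \in (0,1/100)$ by continuity. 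The remaining cross terms are estimated by Cauchy--Schwarz ($|C_i| \leq \sqrt{YZ}, \sqrt{YW}, \sqrt{ZW}$) and Young's inequality, with the $Y,Z$ factors reabsorbed via the Simons and Bochner identities, yielding $X \leq C\int_M |A|^{3+\delta}u^{-2-2\delta/3}\psi^{3+\delta}|\nabla\psi|^2\,dV_g$.

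Finally, I would apply H{\"o}lder's inequality with conjugate exponents $\tfrac{5+\delta}{3+\delta}$ and $\tfrac{5+\delta}{2}$ to the right-hand side, obtaining $X \leq C\,X^{(3+\delta)/(5+\delta)}\bigl(\int_M u^{-2-2\delta/3}|\nabla\psi|^{5+\delta}\bigr)^{2/(5+\delta)}$. Since $X < \infty$ for compactly supported $\psi$ (as $|A|,u \in C^\infty(M)$ with $u > 0$ on $\mathrm{supp}\,\psi$), dividing concludes the proof. The main obstacle is the bookkeeping in the absorption step: one must simultaneously verify that the net coefficient of $X$ is less than unity and that every cross term closes with a uniform constant. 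The specific weight $u^{-2-2\delta/3}$ is not a free parameter but is uniquely selected by this absorption scheme, in parallel with the choice $k=2/3$ used to define the conformal metric $\widetilde{g} = u^{4/3}g$ in Lemma~\ref{l-con}.
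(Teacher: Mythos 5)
Your outline is correct and genuinely different in organization from the paper's argument, so let me compare. The paper does not return to the raw stability inequality: it quotes the Schoen--Simon--Yau estimate $\int_M|A|^pf^2\le C\int_M|A|^{p-2}|\nabla f|^2$ (valid for $p\in[4,4+\sqrt{8/n}]$, which accommodates $p=5+\delta$ when $n=3$) as a black box with an untracked constant, inserts $f=u^{\alpha}\psi$ with $\alpha=-1-\delta/3$, and then devotes all of its effort to the single auxiliary estimate $\int_M|A|^{p-2}\psi^2|\nabla u^{\alpha}|^2\le C\int_M|A|^{p-2}u^{2\alpha}|\nabla\psi|^2$, obtained by integrating the Jacobi equation \eqref{eq-u} by parts (the sign $\alpha<0$ makes the resulting $\int|A|^pu^{2\alpha}\psi^2$ term favorable) and cancelling the $|\nabla|A||^2$ terms exactly against the Simons--Kato inequality via the choice of $t_1$. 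You instead test stability directly with $f=|A|^{(3+\delta)/2}u^{-1-\delta/3}\psi^{(5+\delta)/2}$ and must therefore track every constant. Your stated ingredients check out: the Kato constant gives $Y\le\tfrac38X+\text{cross}$ at $\delta=0$, your Bochner identity $X+3Z=3C_1+5C_3$ is exactly what integrating $-\Delta\log u=|A|^2+|\nabla\log u|^2$ against $|A|^3u^{-2}\psi^5$ produces, and $\tfrac43-\tfrac{27}{32}=\tfrac{47}{96}$ is the correct pre-absorption coefficient. The approaches are thus two bookkeepings of the same two mechanisms (the favorable sign of the $u$-equation and Simons--Kato); yours is more self-contained, the paper's is less arithmetic-intensive.

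One warning on the step you flag as the main obstacle: the residual coefficient of $C_1=\int|A|^2u^{-3}\langle\nabla u,\nabla|A|\rangle\psi^5$ after your substitutions is $-\tfrac{5}{16}$, and absorbing $\tfrac{5}{16}|C_1|\le\tfrac{5}{16}\sqrt{YZ}$ consumes almost the entire margin: the Bochner identity forces $Z\le Y$ (up to $\sqrt{YW}$ and $W$ terms) and the Simons inequality then forces $Y\le\tfrac32X$ (up to the same), so $\tfrac{5}{16}\sqrt{YZ}\le\tfrac{45}{96}X+\dots$, against your available $\tfrac{47}{96}X$. The scheme therefore closes, but with true margin $\tfrac{1}{48}$ rather than $\tfrac{47}{96}$, and only if you derive the bound $Z\lesssim Y$ from the Bochner identity \emph{before} applying Young to $\sqrt{YZ}$; a symmetric Young splitting of $\sqrt{YZ}$ at that stage, or any absorption that bounds $Z$ by $2Y$ (which is what the crude route $3Z\le3\sqrt{YZ}+5\sqrt{ZW}$ alone yields if you discard the $-X$ term too early and then re-feed it through Simons), pushes the coefficient past $1$ and the argument fails. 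Eliminating $C_1$ exactly via your Bochner identity (keeping $Z$ on the left, where it arrives with the helpful coefficient $+\tfrac{5}{16}$ and absorbs the $\sqrt{ZW}$ terms) is the cleaner way to finish and avoids the near-cancellation entirely.
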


\begin{proof}
Again, we do part of the computations for every $n$. From \cite{SSY} we get
\begin{equation}\label{SSY_1}
\int_M |A|^p  \varphi^2 \leq C \int_M  |A|^{p-2}|\nabla \varphi|^2\qquad\forall \varphi\in C^{\infty}_0(M),
\end{equation}
for every $p\in[4,4+\sqrt{8/n}]$ and for some $C=C(n,p)>0$.  For the sake of completeness we report here the proof of \eqref{SSY_1}.  We take $\varphi=|A|^{1+q}\psi$, $q\geq 0$, with $\psi\in C^{\infty}_0(M)$, in the stability inequality \eqref{eq-st} obtaining
$$
\int_M |A|^{4+2q}\psi^2 \leq [(1+q)^2+\eps]\int_M |A|^{2q}|\nabla|A||^2\psi^2 + \frac{1+q}{\eps}\int_M |A|^{2+2q}|\nabla \psi|^2,
$$
for every $\eps>0$, where we used Young's inequality. On the other hand, multiplying Simons' inequality (see \cite[Lemma 2.1]{colmin} for a proof)
\begin{equation}\label{simons} 
|A|\Delta|A|+|A|^4\geq \frac{2}{n}|\nabla|A||^2.
\end{equation}
by $|A|^{2q}\psi^2$ and integrating by parts, we get
$$
\left(\frac 2n +1+2q-\eps\right)\int_M |A|^{2q}|\nabla|A||^2\psi^2 \leq  \int_M |A|^{4+2q}\psi^2 + \frac{1}{\eps}\int_M |A|^{2+2q}|\nabla \psi|^2
$$
for every $\eps>0$, where we used again Young's inequality. Since $q\geq 0$,  for $\eps>0$ sufficiently small, we obtain
$$
\left\{1- [(1+q)^2+\eps]\left(\frac 2n +1+2q-\eps\right)^{-1}\right\}\int_M |A|^{4+2q}\psi^2 \leq C\int_M |A|^{2+2q}|\nabla \psi|^2.
$$
Let $q:=\frac{p-4}{2}$. For $\eps>0$ small enough, we have
$$
1- [(1+q)^2+\eps]\left(\frac 2n +1+2q-\eps\right)^{-1}>0
$$
if $p\in[4,4+\sqrt{8/n}]$ and we finally obtain
$$
\int_M |A|^p  \psi^2 \leq C \int_M  |A|^{p-2}|\nabla \psi|^2\qquad\forall \psi\in C^{\infty}_0(M).
$$
Taking $\psi=\varphi^{p/2}$, by Holder's inequality we get \eqref{SSY_1}.

Take $\varphi=u^{\alpha}\psi$, with $\psi$ smooth with compact support, $u$ the solution of \eqref{eq-u} and $\alpha<0$.  Since, from Cauchy-Schwarz and Young's inequalities,
\begin{equation}\label{grad_product}
\vert\nabla (u^{\alpha}\psi)\vert^2 \leq 2 \psi^2 \vert\nabla (u^{\alpha})\vert^2 + 2 u^{2\alpha}\vert\nabla \psi\vert^2 \,,
\end{equation}
then \eqref{SSY_1} becomes
\begin{equation}\label{SSY_2}
\int_M |A|^p  u^{2\alpha}\psi^2 \leq 2C\left[ \int_M  |A|^{p-2}\psi^2|\nabla u^\alpha|^2 + \int_M  |A|^{p-2}u^{2\alpha}|\nabla \psi|^2 \right]   \quad\forall \psi\in C^{\infty}_0(M).
\end{equation}
Now we tackle the first integral on the right-hand side of \eqref{SSY_2} firstly integrating by parts
\begin{align*}
\int_M  |A|^{p-2}\psi^2|\nabla u^\alpha|^2= -\int_M  |A|^{p-2}\psi^2 u^\alpha\Delta u^{\alpha}- \int_M  u^\alpha\psi^2 \langle \nabla u^\alpha, \nabla  |A|^{p-2}\rangle\\
- 2\int_M |A|^{p-2} u^\alpha\psi  \langle \nabla u^\alpha, \nabla  \psi\rangle\, ,
\end{align*}
secondly we use the fact that
$$
\Delta u^\alpha=\alpha u^{\alpha-1}\Delta u + \alpha(\alpha-1)u^{\alpha-2}\vert\nabla u\vert^2\quad \text{and} \quad \vert\nabla u^\alpha\vert^2=\alpha^2 u^{2\alpha-2}\vert\nabla u\vert^{2} \, ,
$$
together with Cauchy-Schwarz and Young's inequalities to get
\begin{align*}
\int_M  |A|^{p-2}\psi^2|\nabla u^\alpha|^2\leq - \alpha\int_M |A|^{p-2}u^{2\alpha-1}\psi^2\Delta u - \dfrac{\alpha-1}{\alpha}\int_M |A|^{p-2} \psi^2 \vert\nabla u^\alpha\vert^2 \\
 - \int_M  u^\alpha\psi^2 \langle \nabla u^\alpha, \nabla  |A|^{p-2}\rangle + \eps\int_M |A|^{p-2}\psi^2\vert\nabla u^\alpha\vert^2  + \frac{1}{\eps}\int_M |A|^{p-2} u^{2\alpha} \vert\nabla \psi\vert^2\, ,
\end{align*}
for all $\varepsilon>0$. From \eqref{eq-u} we find
\begin{align*}
\int_M  |A|^{p-2}\psi^2|\nabla u^\alpha|^2\leq \alpha\int_M |A|^{p}u^{2\alpha}\psi^2- \dfrac{\alpha-1}{\alpha}\int_M |A|^{p-2} \psi^2 \vert\nabla u^{\alpha}\vert^2 \\
 - \int_M  u^\alpha\psi^2 \langle \nabla u^\alpha, \nabla  |A|^{p-2}\rangle + \eps\int_M |A|^{p-2}\psi^2\vert\nabla u^\alpha\vert^2  + \frac{1}{\eps}\int_M |A|^{p-2} u^{2\alpha} \vert\nabla \psi\vert^2\, ,
\end{align*}
i.e.
\begin{align*}
\left( 1-\eps+ \frac{\alpha-1}{\alpha} \right)\int_M  |A|^{p-2}\psi^2|\nabla u^\alpha|^2\leq  \alpha\int_M |A|^{p}u^{2\alpha}\psi^2\\- \int_M  u^\alpha\psi^2 \langle \nabla u^\alpha, \nabla  |A|^{p-2}\rangle + \frac{1}{\eps}\int_M |A|^{p-2} u^{2\alpha} \vert\nabla \psi\vert^2\, ,
\end{align*}
Now, since
$$
\nabla  |A|^{p-2}=(p-2) \vert A\vert^{p-3}\nabla  |A|=(p-2) \vert A\vert^{\frac{p-2}{2}}\vert A\vert^{\frac{p-4}{2}}\nabla  |A|\, ,
$$
then, from Cauchy-Schwarz and Young's inequalities we obtain
\begin{align}\label{eq-s2}
\left( 1-\eps+ \frac{\alpha-1}{\alpha}-\frac{p-2}{2t_1} \right)\int_M  |A|^{p-2}\psi^2|\nabla u^\alpha|^2\leq \alpha\int_M |A|^{p}u^{2\alpha}\psi^2 \\ \nonumber+\frac{(p-2)t_1}{2}\int_M |A|^{p-4}\psi^2 u^{2\alpha} \vert\nabla \vert A\vert\vert^2 + \frac{1}{\eps}\int_M |A|^{p-2} u^{2\alpha} \vert\nabla \psi\vert^2\, ,
\end{align}
for every $t_1>0$. Now, multiplying by $|A|^{p-4}f^2$ the Simons' inequality \eqref{simons}, integrating by parts and using Young's inequality we obtain
\begin{align*}
\int_M |A|^p f^2 \geq \left(\frac2n+p-3-t_2\right)\int_M |A|^{p-4}|\nabla|A||^2f^2-\frac{1}{t_2}\int_M|A|^{p-2}|\nabla f|^2
\end{align*}
for every $t_2>0$. Choosing $f=u^{\alpha}\psi$ we get
\begin{align}\label{eq-gianny}
\int_M |A|^p u^{2\alpha}\psi^2 &\geq \left(\frac2n+p-3-t_2\right)\int_M |A|^{p-4}|\nabla|A||^2 u^{2\alpha}\psi^2\\\nonumber
&\quad-\left(\frac{1}{t_2}+\eps\right)\int_M  |A|^{p-2}\psi^2|\nabla u^\alpha|^2 -\frac{1}{t_2}\left(1+\frac{1}{t_2\eps}\right)\int_M  |A|^{p-2}u^{2\alpha}|\nabla \psi|^2
\end{align}
for every $\eps>0$, since
$$
\vert\nabla (u^{\alpha}\psi)\vert^2 \leq (1+t_2\eps) \psi^2 \vert\nabla (u^{\alpha})\vert^2 + \left(1+\frac{1}{t_2\eps}\right) u^{2\alpha}\vert\nabla \psi\vert^2 \,.
$$
 Now let $\delta>0$. Using \eqref{eq-gianny} in \eqref{eq-s2} with $$\alpha=-1-\frac{\delta}{3}\leq-1$$ we obtain
\begin{align*}
&\left(1+\frac{2+\frac{\delta}{3}}{1+\frac{\delta}{3}}-\left(2+\frac{\delta}{3}\right)\eps-\frac{p-2}{2t_1}-\frac{1+\frac{\delta}{3}}{t_2} \right)\int_M  |A|^{p-2}\psi^2|\nabla u^{-1-\frac{\delta}{3}}|^2\\
&\,\leq  \left[\frac{1}{\eps}+\frac{1+\frac{\delta}{3}}{t_2}\left(1+\frac{1}{t_2\eps}\right)\right]\int_M |A|^{p-2} u^{-2-\frac{2\delta}{3}} \vert\nabla \psi\vert^2 \\ 
&\,+\left[\frac{(p-2)t_1}{2}-\frac{2+\frac{2\delta}{3}}{n}-\left(1+\frac{\delta}{3}\right)p+3+\delta+\left(1+\frac{\delta}{3}\right)t_2\right]\int_M |A|^{p-4}\psi^2 u^{-2-\frac{2\delta}{3}} \vert\nabla \vert A\vert\vert^2 \,,
\end{align*}
for all $\eps,t_1,t_2>0$. Let
$$
n=3, \quad p=5+\delta,\quad t_1=\frac{2(5+3\delta)}{9},\quad t_2=1
$$
we obtain
$$
\frac{(p-2)t_1}{2}-\frac{2+\frac{2\delta}{3}}{n}-\left(1+\frac{\delta}{3}\right)p+3+\delta+\left(1+\frac{\delta}{3}\right)t_2=0
$$
and
$$
1+\frac{2+\frac{\delta}{3}}{1+\frac{\delta}{3}}-\frac{p-2}{2t_1}-\frac{1+\frac{\delta}{3}}{t_2}=\frac{603+378\delta+7\delta^2-12\delta^3}{180+168\delta+36\delta^2}.
$$
Thus
\begin{align*}
&\left( \frac{603+378\delta+7\delta^2-12\delta^3}{180+168\delta+36\delta^2}-\left(2+\frac{\delta}{3}\right)\eps \right)\int_M  |A|^{3+\delta}\psi^2|\nabla u^{-1-\frac{\delta}{3}}|^2\\
&\qquad\leq  \left[\frac{1}{\eps}+\left(1+\frac{\delta}{3}\right)\left(1+\frac{1}{\eps}\right)\right]\int_M |A|^{3+\delta} u^{-2-\frac{2\delta}{3}} \vert\nabla \psi\vert^2 ,
\end{align*}
for all $\eps>0$. Choosing  $0<\delta<1/100$ and $\eps$ small enough we obtain
$$
\int_M  |A|^{3+\delta}\psi^2|\nabla u^{-1-\frac{\delta}{3}}|^2\leq  C\int_M |A|^{3+\delta} u^{-2-\frac{2\delta}{3}} \vert\nabla \psi\vert^2\,,
$$
for some $C>0$. From \eqref{SSY_2} and Young's inequality we get
\begin{align*}
\int_M |A|^{5+\delta}  u^{-2-\frac{2\delta}{3}}\psi^2 &\leq C\int_M  |A|^{3+\delta}u^{-2-\frac{2\delta}{3}}|\nabla \psi|^2 \\
&\leq \eps' \int_M |A|^{5+\delta}  u^{-2-\frac{2\delta}{3}}\psi^2+\frac{C}{\eps'}\int_M u^{-2-\frac{2\delta}{3}}|\nabla \psi|^{5+\delta}\psi^{-(3+\delta)}
\end{align*}
for all $\eps'>0$ and $\psi\in C^{\infty}_0(M)$. Therefore
\begin{align*}
\int_M |A|^{5+\delta}  u^{-2-\frac{2\delta}{3}}\psi^2 &\leq C\int_M u^{-2-\frac{2\delta}{3}}|\nabla \psi|^{5+\delta}\psi^{-(3+\delta)}\\
&=C\int_M u^{-2-\frac{2\delta}{3}}|\nabla \psi^{\frac{2}{5+\delta}}|^{5+\delta}.
\end{align*}
The conclusion now follows immediately by replacing $\psi$ with $\psi^{\frac{2}{5+\delta}}$.
\end{proof}

\subsection{Final estimate} Combining Lemma \ref{l-ssy} with Corollary \ref{c-bg} we can conclude the proof of Theorem \ref{t1}. More precisely, let $x_0\in M$ and let $\widetilde{r}$ the distance function from $x_0$ with respect to the metric $\gt=u^{\frac 43} g$. We choose  $\psi:=\eta(\widetilde{r})$ with $0\leq\eta\leq 1$, $\eta\equiv 1$ on $[0,R]$, $\eta\equiv 0$ on $[2R,+\infty)$ and $|\eta'|\leq C/R$ on $[R,2R]$, for some $C>0$ and $R>0$. From Lemma \ref{l-ssy}, for some $0<\delta<1/100$, we have
\begin{align*}
\int_M |A|^{5+\delta} u^{-2-\frac{2\delta}{3}} \eta^{5+\delta}\,dV_g &\leq C \int_M  u^{-2-\frac{2\delta}{3}} |\nabla \psi|_g^{5+\delta} \,dV_g\\
&= C \int_M  u^{-2-\frac{2\delta}{3}+\frac{2(5+\delta)}{3}} |\tilde\nabla \psi|_{\gt}^{5+\delta}\,dV_g\\
&\leq\frac{C}{R^{5+\delta}}\int_{B^{\gt}_{2R}(x_0)}  u^{\frac{4}{3}}\,dV_g\\
&\leq \frac{C}{R^\delta},
\end{align*}
where we used the fact that $|\tilde\nabla \widetilde{r}|_{\gt}\equiv 1$ and Corollary \ref{c-bg}. Since $\delta>0$, letting $R\to+\infty$ we get
$$|A|\equiv 0\quad\text{on } M^3$$ and this concludes the proof of  Theorem \ref{t1}.

\

\section{Proof of Theorem \ref{t2}}

\begin{proof}[Proof of Theorem \ref{t2}] Let $(X^{n+1},h)$ be a complete $n$-dimensional, $n\leq 5$, manifold with non-negative sectional curvature and either uniformly positive bi-Ricci curvature or uniformly positive Ricci curvature and consider an orientable, immersed, minimal hypersurface $M^n\to (X^{n+1},h)$ with finite index. Suppose, by contradiction, that $M$ is non-compact. It is well known (see \cite[Proposition 1]{fis}) that there exist $0<u\in C^{\infty}(M)$ and a compact subset $K\subset M$ such that $u$ solves
$$
-\Delta u = \left[|A|^2+\ricc_h(\nu,\nu)\right] u\qquad\text{on } M\setminus K.
$$
Let $k>0$ and consider the conformal metric
$$
\gt = u^{2k} g.
$$
where $g$ is the induced metric on $M$. Let  $s$ be the arc length with respect to the metric $g$. Following the construction in \cite[Theorem 1]{fis}, we can construct a minimizing geodesic $\widetilde\gamma(s):[0,+\infty)\to M\setminus K$ in the metric $\gt$ which has infinite length in the metric $g$. Now we can argue exactly as in the proof of estimate $(6)$ in \cite{elbnelros}, using $H\equiv 0$, obtaining
\begin{align*}
(n-1)\int_0^a (\varphi_s)^2\,ds &\geq k(n-3)\int_0^a \varphi\varphi_s\frac{u_s}{u}\,ds +\frac{k\left[4-k(n-1)\right]}{4}\int_0^a\varphi^2\left(\frac{u_s}{u}\right)^2\,ds\\
&\quad+\int_0^a \varphi^2\left(k\mathrm{Ric}_h(\nu,\nu)+\sum_{j=2}^n R^h_{1j1j}\right)\,ds\\
&\quad+\int_0^a \varphi^2 \left(k|A|^2-A^2_{11}-\sum_{j=2}^n A^2_{1j}\right)\,ds,
\end{align*}
for every smooth function $\varphi$ such that $\varphi(0)=\varphi(a)=0$ and for every $k>0$.  Arguing as in \cite[Section 2]{ShYe} we have the following identity
\begin{align*}
k\mathrm{Ric}_h(\nu,\nu)+\sum_{j=2}^n R^h_{1j1j} &= k\mathrm{BRic}_h(e_1,\nu)-k\mathrm{Ric}_h(e_1,e_1)+kR^h_{1\nu1\nu}+\sum_{j=2}^n R^h_{1j1j}\\
&=k\mathrm{BRic}_h(e_1,\nu)+(1-k)\sum_{j=2}^n R^h_{1j1j}.
\end{align*}
Since $(N^{n+1},h)$ is with non-negative sectional curvature and either uniformly positive bi-Ricci curvature or uniformly positive Ricci curvature, we have $R^h_{1j1j}\geq 0$ for every $j=2,\ldots,n$ and either
$$
\mathrm{BRic}_h(e_1,\nu)\geq R_0\quad\text{ or }\quad \mathrm{Ric}_h(\nu,\nu)\geq R_0
$$
for some $R_0>0$. Therefore, if $k\leq 1$, we get
\begin{align*}
(n-1)\int_0^a (\varphi_s)^2\,ds &\geq k(n-3)\int_0^a \varphi\varphi_s\frac{u_s}{u}\,ds +\frac{k\left[4-k(n-1)\right]}{4}\int_0^a\varphi^2\left(\frac{u_s}{u}\right)^2\,ds\\
&\quad+\int_0^a \varphi^2 \left(kR_0+k|A|^2-A^2_{11}-\sum_{j=2}^n A^2_{1j}\right)\,ds.
\end{align*}
Since $A$ is trace-free, we have
$$
|A|^2\geq A_{11}^2+A_{22}^2+\ldots+A_{nn}^2+2\sum_{j=2}^n A_{1j}^2 \geq \frac{n}{n-1}A_{11}^2+2\sum_{j=2}^n A_{1j}^2,
$$
thus
$$
k|A|^2-A^2_{11}-\sum_{j=2}^n A^2_{1j}\geq \left(\frac{kn}{n-1}-1\right)A_{11}^2+(2k-1)\sum_{j=2}^n A^2_{1j}.
$$
Choose
$$
k=\frac{n-1}{n}\leq 1.
$$
We get
\begin{align*}
\int_0^a (\varphi_s)^2\,ds &\geq \frac{n-3}{n}\int_0^a \varphi\varphi_s\frac{u_s}{u}\,ds +\frac{6n-n^2-1}{4n^2}\int_0^a\varphi^2\left(\frac{u_s}{u}\right)^2\,ds+\frac{R_0}{n}\int_0^a \varphi^2\,ds.
\end{align*}
If $n\leq 5$, we have
$$
\frac{6n-n^2-1}{4n^2}\geq \delta_0>0.
$$
Moreover, there exists $C>0$, such that
$$
\frac{n-3}{n}\varphi\varphi_s\frac{u_s}{u}\geq -\delta_0 \varphi^2\left(\frac{u_s}{u}\right)^2-C(\varphi_s)^2.
$$
Therefore, there exists $C>0$, such that
$$
C\int_0^a (\varphi_s)^2\,ds \geq \frac{R_0}{n}\int_0^a \varphi^2\,ds
$$
for every smooth function $\varphi$ such that $\varphi(0)=\varphi(a)=0$. Integrating by parts we obtain
$$
\int_0^a \left(\varphi\varphi_ss+CR_0\varphi^2\right)ds\leq 0.
$$
Choosing $\varphi(s)=\sin(\pi s\,a^{-1})$, $s\in[0,a]$ one has
$$
\left(CR_0-\frac{\pi^2}{a^2}\right)\int_0^a\sin^2(\pi s\,a^{-1})ds\leq 0
$$
i.e.
$$
a^2\leq \frac{\pi^2}{CR_0}.
$$
We conclude that the length (in the metric $g$) of the geodesic $\widetilde{\gamma}(s)$ is finite and this gives a contradiction. Therefore $(M^n,g)$ must be compact and this concludes the proof of Theorem \ref{t2}.

\end{proof}

\begin{proof}[Proof of Corollary \ref{cor}]
If $M$ is stable, by Theorem \ref{t2} it must be compact.  Moreover there exists $u>0$ satisfying
$$
-\Delta u = \left[|A|^2+\ricc_h(\nu,\nu)\right] u\qquad\text{on } M.
$$
Integrating over $M$ we get a contradiction, since $\ricc_h>0$ on $M$. Equivalently, one can use $f\equiv 1$ in the stability inequality \eqref{eq-st} to get a contradiction.
\end{proof}

\

\

\

\begin{ackn}
\noindent The first and the second authors are members of the {\em GNSAGA, Gruppo Nazionale per le Strutture Algebriche, Geometriche e le loro Applicazioni} of INdAM. The third author is member of {\em GNAMPA, Gruppo Nazionale per l'Analisi Matematica, la Probabilit\`a e le loro Applicazioni} of INdAM. The authors thank Douglas Stryker for useful comments regarding Theorem \ref{t2}.
\end{ackn}

\

\

\noindent{\bf Data availability statement}

\noindent Data sharing not applicable to this article as no datasets were generated or analysed during the current study.

\

\

\

\

\

\end{document}